\newtheorem{thm}{Theorem}
\newtheorem{defn}{Definition}
\newtheorem{lem}{Lemma}
\newtheorem{prop}{Proposition}
\newenvironment{sketch-proof}{{\it Sketch of the Proof:\ }}{ \hfill}
\newcommand{\R}{{\mathbb R}}
\newcommand{\cC}{{\mathcal C}}
\newcommand{\cD}{{\mathcal D}}
\newcommand{\cF}{{\mathcal F}}
\newcommand{\cH}{{\mathcal H}}
\newcommand{\cL}{{\mathcal L}}
\newcommand{\cM}{{\mathcal M}}
\newcommand{\cP}{{\mathcal P}}
\newcommand{\cS}{{\mathcal S}}
\newcommand{\trace}{{\text{tr}}}
\newcommand{\xstar}{x^{\star}}
\newcommand{\Xstar}{X^{\star}}
\algrenewcommand\algorithmicrequire{\textbf{Initialize:}}
\algrenewcommand\algorithmicensure{\textbf{Return:}}
\DeclareMathOperator*{\argmin}{arg\,min}
\title{Distance to the Nearest Stable Metzler Matrix} 
\author{James Anderson
\thanks{JA is with the Department of Computing + Mathematical Sciences, California Institute of Technolgy, Pasadena, CA 91125, {\tt\small james@caltech.edu} This work was funded by  NSF CNS award 1545096, ECCS award 1619352, CCF award 1637598, and ARPA-E award GRID DATA.}}
\begin{document}

\maketitle

\begin{abstract}
This paper considers the non-convex problem of finding the nearest Metzler matrix to a given possibly unstable matrix. Linear systems whose state vector evolves according to a Metzler matrix have many desirable properties in analysis and control with regard to scalability. This motivates the question, how close (in the Frobenius norm of coefficients) to the nearest Metzler matrix are we?  Dropping the Metzler constraint, this problem has recently been studied using the theory of dissipative Hamiltonian (DH) systems, which provide a helpful characterization of the feasible set of stable matrices. This work uses the DH theory to provide a block coordinate descent consisting of a quadratic program with favourable structural properties and a semidefinite program for which recent diagonal dominance results can be used to improve tractability.

\end{abstract}

\IEEEpeerreviewmaketitle

\section{Introduction}\label{se:Intro}

\subsection{Introduction}
In this paper we consider the problem of finding the nearest stable Metzler matrix to a given non-Metzler (possibly unstable) matrix. Metzler matrices and the related class of positive systems have become very popular in the systems and control community as they can model a wide class of physically important systems, e.g. population vectors in ecological networks, transportation systems, chemical reaction networks, etc. Moreover, such systems have nice theoretical properties that lend themselves well to scalable computational analysis. Positive systems have been studied from a variety of perspectives, including stability and performance analysis \cite{Ran15}, robust synthesis~\cite{TanL11}, model reduction~\cite{SooA17}, and optimization~\cite{LavRL11}. 

Thus, it seems that given a matrix which is not Metzler, it may be worth the effort to determine whether it is `close' to being Metzler in some sense. Distance problems arise frequently in the control literature. For example, the \emph{structured singular value} $\mu$, a cornerstone of modern robust analysis, is a measure of the distance to instability when considering structured perturbations~\cite{PacD93,ColS16}. The classical Nehari problem measures the shortest distance in $\cL_{\infty}$ between a causal and an anti-causal transfer matrix, and arises in optimal model reduction~\cite{Glo84} and the classical Youla approach to $\cH_{\infty}$-synthesis~\cite[Ch. 17]{ZDG}. Of particular relevance to this paper is the \emph{real stability radius}~\cite{QiuBRDYD95} problem, which can be thought of as a dual to the problem considered here, where we wish to find the smallest perturbation to a stable square matrix that renders it unstable. 

The problem of finding the nearest stable matrix (with no Metzler constraints) to an unstable one is in general a difficult problem. One of the main reasons for this is that the set of stable matrices does not form a convex set \cite{OrbNV13}, and thus formulating an optimization problem over the set is not trivial. Secondly, the spectrum of a stable matrix depends in a complicated manner on the coefficients of the matrix that defines it. For both of these reasons analytic solutions to the problem appear to be out of reach for anything but trivial cases. In this work we use the theory of dissipative Hamiltonian systems \cite{Sch06} and build on the framework developed in \cite{MehMS16,GilS16} to provide a convex approximation of the set of stable Metzler matrices.

In Section~\ref{se:Intro} we introduce the problem we are trying to solve  and some now well-known system-theoretic properties of \emph{positive systems}. In Section~\ref{se:DHS} the concept of (stable) \emph{dissipative Hamiltonian systems} and some useful characterising properties are described, and then in Section~\ref{se:Res} we present the main results and two algorithms that solve the relevant optimization problem. In Section~\ref{se:eg} illustrative numerical examples are presented.
\subsection*{Notation}
The notation used in this paper is standard. Let $H$ be a symmetric square matrix, then $H$ is said to be positive semidefinite, denoted $H\succeq 0$, if $x^THx\ge 0$ for all $x\in \R^n$. Further, $H$ is positive definite, denoted $H\succ 0$, if $x^THx>0$ for all $x\neq 0$. The inequality $\ge$ acts element-wise on a (not necessarily square) matrix, i.e. $H\ge 0$ implies $H_{ij}\ge 0$ for all $i,j$. Given a convex set $\cC$, the operator that projects a vector $z$ onto $\cC$ is denoted by $\cP_{\cC}z$. Finally, $\mathbb{S}_+^n$ and $\mathbb{G}^n$ denote the set of $n\times n$ positive semidefinite and skew-symmetric matrices; their dimension will be clear from context and so is omitted from the notation. 
\subsection{Problem Setup}
Assume that we are given a matrix $A\in \R^{n\times n}$, and we are interested in computing the distance to the `nearest' Metzler matrix, where a Metzler matrix is any square $n\times n$ matrix with non-negative entries on the off-diagonal elements, and the set of $n\times n$ Metzler matrices is denoted by $\cM^n$. We will restrict our attention to a search for the nearest \emph{asymptotically stable} Metzler matrix. Denote by $\cS^{n}$ the set of all asymptotically stable matrices of dimension $n$, i.e. $A\in \cS^n$ iff $\Re(\lambda_i)<0$ for $i=1,\hdots,n$. Note that this is an open set. Denote the set of asymptotically stable Metzler matrices of dimension $n$ by $\cM\cS^n$. The following theorem from \cite{Ran15} summarises some key Lyapunov stability results for systems of the form
\begin{equation*}
\dot{x}(t) = Ax(t), \quad A\in \cM^n.
\end{equation*}
\begin{thm}\label{thm:Ran}
Given a Metzler matrix $A\in \cM^n$, the following statements are equivalent:
\begin{enumerate}
\item $A$ is Hurwitz, i.e. $A\in \cM\cS^n$.
\item There exists a vector $\zeta \in \R^n$ such that $A\zeta<0$ with $\zeta>0$.
\item There exists a vector $z\in \R^n$ such that $z^TA<0$ with $z>0$.
\item There exists a diagonal matrix $P\succ 0$ such that $A^TP+PA \prec 0$. One such $P$ is given by $P_{ii}=\frac{z_i}{\zeta_i}$ for $i=1,\hdots,n$.
\item Lyapunov functions corresponding to statements 2) and 3) are given by $V(x)=\max_i~x_i/\zeta_i $ and $V(x)=z^Tx$ respectively.
\end{enumerate}
\end{thm}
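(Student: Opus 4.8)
The plan is to establish the equivalences as a single cycle in which the Metzler hypothesis is used only where genuinely needed, and then to verify the Lyapunov functions of statement 5) separately. The backbone I would use is $(4)\Rightarrow(1)$, $(1)\Rightarrow(2)$, $(2)\Rightarrow(1)$, and $(2)\,\&\,(3)\Rightarrow(4)$; statement 3) comes for free by applying the reasoning for 2) to $A^T$, which is again Metzler and has the same spectrum as $A$, so that $A\in\cM\cS^n$ iff $A^T\in\cM\cS^n$.

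I would start with $(4)\Rightarrow(1)$ since it is the only implication that uses nothing about the Metzler structure: if a diagonal $P\succ0$ satisfies $A^TP+PA\prec0$, then $V(x)=x^TPx$ is a strict quadratic Lyapunov function ($\dot V=x^T(A^TP+PA)x<0$ for $x\neq0$), so the origin of $\dot x=Ax$ is globally asymptotically stable and every eigenvalue lies in the open left half plane.

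For the spectral implications I would first record the Perron--Frobenius fact underlying everything: shifting $A$ by $\alpha I$ with $\alpha$ exceeding the most negative diagonal entry produces a nonnegative matrix $M=A+\alpha I\ge0$ (possible precisely because off-diagonal entries of a Metzler matrix are already nonnegative), whose spectrum is that of $A$ translated by $\alpha$. Hence the eigenvalue of $A$ of largest real part equals $\rho(M)-\alpha$, is real, and carries nonnegative left and right eigenvectors. This gives $A\in\cM\cS^n$ iff $\rho(M)-\alpha<0$. Then $(2)\Rightarrow(1)$ is short: if $u\ge0$, $u\neq0$, is a left eigenvector for the dominant eigenvalue $\lambda^\star$ and $A\zeta<0$ with $\zeta>0$, pairing gives $\lambda^\star\,u^T\zeta=u^TA\zeta<0$ while $u^T\zeta>0$, forcing $\lambda^\star<0$ and hence stability. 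The step $(1)\Rightarrow(2)$ is where I expect the real difficulty: I must upgrade the nonnegative Perron eigenvector to a \emph{strictly positive} $\zeta$ satisfying the \emph{strict} inequality $A\zeta<0$, which can fail at the level of the bare eigenvector when $M$ is reducible. The clean way is to observe that $A\in\cM\cS^n$ is equivalent to $-A$ being a nonsingular $M$-matrix (write $-A=\alpha I-M$ with $M\ge0$ and $\alpha>\rho(M)$), and then to invoke the standard $M$-matrix property that such a matrix admits some $\zeta>0$ with $-A\zeta>0$; alternatively one perturbs $M$ to be irreducible, extracts a strictly positive eigenvector, and passes to the limit while keeping the inequality strict. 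This is the one place the argument is not essentially a one-line computation.

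Finally, for $(2)\,\&\,(3)\Rightarrow(4)$ I would take $\zeta>0$ from 2) and $z>0$ from 3) and set $P=\mathrm{diag}(z_i/\zeta_i)\succ0$. A direct computation shows $S:=A^TP+PA$ is symmetric with entries $S_{ij}=p_iA_{ij}+p_jA_{ji}$, so its off-diagonal entries are nonnegative and $S$ is itself a symmetric Metzler matrix; moreover, using $p_j\zeta_j=z_j$, one finds $(S\zeta)_i=p_i(A\zeta)_i+(A^Tz)_i<0$ for every $i$, i.e. $S\zeta<0$ with $\zeta>0$. Applying the already-proven implication $(2)\Rightarrow(1)$ to the symmetric Metzler matrix $S$ shows $S$ is Hurwitz, and for a symmetric matrix this is exactly $S\prec0$, which is statement 4). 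It remains to confirm statement 5): I would check that $V(x)=z^Tx$ satisfies $\dot V=z^TAx<0$ on the positive orthant (invariant for the Metzler dynamics), and that the weighted function $V(x)=\max_i x_i/\zeta_i$ decays along trajectories by a comparison argument built on $A\zeta<0$, so that both indeed serve as Lyapunov certificates consistent with the implications above.
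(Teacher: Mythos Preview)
The paper does not give its own proof of this theorem: it is stated as a summary result quoted from \cite{Ran15} and used as background, with no argument supplied. There is therefore nothing in the paper to compare your proposal against.

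For what it is worth, your outline is the standard route and is essentially correct. The cycle $(4)\Rightarrow(1)\Rightarrow(2)\Rightarrow(1)$ together with the duality $A\leftrightarrow A^T$ for statement 3), and the construction $P=\mathrm{diag}(z_i/\zeta_i)$ to close $(2)\,\&\,(3)\Rightarrow(4)$, is exactly how this is usually done in the positive-systems literature. Your identification of $(1)\Rightarrow(2)$ as the only nontrivial step is accurate, and invoking the nonsingular $M$-matrix characterization (equivalently $-A^{-1}\ge 0$, so $\zeta=-A^{-1}\mathbf{1}>0$ gives $A\zeta=-\mathbf{1}<0$) is the cleanest way to produce a strictly positive $\zeta$ without an irreducibility detour. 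The verification of the two Lyapunov functions in 5) is routine once one restricts to the invariant nonnegative orthant.
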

From Theorem~\ref{thm:Ran}, the computational advantages of dealing with Metzler matrices should be clear -- the most prominent being that Lyapunov stability analysis can be carried out by solving an LP instead of a more intractable SDP. Thus, a reasonable question to ask is: how large is the smallest perturbation to a matrix that results in a  Metzler matrix?

A question which is broader than the one we seek to solve, but which we will build on, is to find the nearest stable matrix to a given unstable one. In posing this question the notion of \emph{distance} must be defined. In this setting the Frobenius-norm seems natural as it is a measure directly related to the matrices' coefficients. The Frobenius norm of a matrix $Z\in \R^{m\times n}$ is defined as
\begin{equation*}
\|Z\|_F = \sqrt{\trace (Z^TZ)} = \sqrt{\sum_{i=1}^m\sum_{j=1}^n |Z_{ij} |^2}.
\end{equation*}
The general problem can then be stated as follows: Given a matrix $A\in \R^{n\times n}$ where $A\notin \cS^n$, solve
\begin{equation}\label{eq:min_dist}
\inf_{X\in \cS^n}~\|A-X\|_F^2
\end{equation}
where  the optimal decision variable is denoted by $X^{\star}$. Note that we have to search for an infimum rather than a minimum since the set of stable matrices is an open set, owing to the fact that eigenvalues depend continuously on the matrix elements. Furthermore, the set of stable matrices does not form a convex set, at least not in the space of coefficients. 

An equivalent way of stating problem~\eqref{eq:min_dist} is based on a Lyapunov stability argument. The matrix $A$ is asymptotically stable if and only if there exists a $P\succ 0$ such that the Lyapunov operator 
\begin{equation*}
\cL_A(P)\triangleq AP+PA^T
\end{equation*}
satisfies $\cL_A(P)\prec 0$ (and stable if $\cL_A(P)\preceq 0$). We can now restate~\eqref{eq:min_dist} as
\begin{eqnarray}
\inf_{P\succ 0, ~X\in\cS^n }&\|A-X\|_F^2 \label{eq:dist_cons} \\
\text{s.t.}& \cL_X(P)\prec 0 \nonumber
\end{eqnarray}
where it can clearly be seen that~\eqref{eq:dist_cons} is not convex in $X$ and $P$. In \cite{OrbNV13} Orbandexivry, Nesterov, and Van Dooren use the theory of self-concordant barrier functions and the Dikin ellipsoid to compute a sequence of improving suboptimal approximations of $\Xstar$. 

We now state the specific problem we would like to solve: Given a matrix $A\in \R^{n\times n}$ (not necessarily belonging to $\cS^n$ or $\cM^n$), compute the distance in terms of the Frobenius-norm of the coefficients to the nearest \emph{stable} Metzler matrix, i.e. solve
\begin{align*}
 \inf_{X\in \cM\cS^n}~\|A-X\|_F^2&    \\ =\inf_{P\succ 0,X\in\cM\cS^n}&\|A-X\|_F^2\quad \text{s.t.} ~\cL_X(P)\prec 0 \\
 = \inf_{P\succ 0,X\in\cM^n}&\|A-X\|_F^2\quad \text{s.t.} ~\cL_X(P) \prec 0.
\end{align*}
Finally, in light of Theorem~\ref{thm:Ran}, the problem can be further reduced to 
\begin{equation}\label{eq:Metz_d_opt}
p^{\star}:=\inf_{\zeta >0,~ X\in \cM^n}~\|A-X\|_F^2 \quad \text{s.t.} ~X\zeta <0.
\end{equation}
To the best of our knowledge, this question has not been addressed in the literature. The contribution of this paper is to describe an iterative algorithm that comprises two convex optimization subproblems whose solution provides an upper bound to this non-convex problem. 

\section{Dissipative Hamiltonian Systems}\label{se:DHS}
Consider the linear dynamical system
\begin{equation}\label{eq:LTI}
\dot{x}(t) = Ax(t), \quad x(0)=x_0\in \R^n,
\end{equation}
where in this case there is no assumption on the structure of $A$. From \cite{GilS16} we say that the system~\eqref{eq:LTI} is said to be a \emph{dissipative Hamiltonian} (DH) system if and only if it can be expressed as 
\begin{equation}\label{eq:DH}
\dot{x}(t) = (J-R)Qx(t), 
\end{equation}
where $Q=Q^T \succ 0$, $R=R^T \succeq 0$, and $J=-J^T$ are fixed matrices of appropriate dimension. The physical interpretation is as follows: the Hamiltonian function $x^TQx$ describes the energy of the system, $R$ describes the energy dissipation or loss of the system, and $J$ describes the energy flux between storage elements in the system.  The following lemma from~\cite{GilS16} will prove useful in the sequel.
\begin{lem}\label{lem:DHstable}
If the eigenvalues of $A$ lie in the closed left half of the complex plane and all eigenvalues on the $j\omega$-axis are semisimple (in which case $\cL_A(P)\preceq 0$), then $A$ has a DH representation.
\end{lem}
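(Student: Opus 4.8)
The plan is to separate the claim into an algebraic factorization step and an analytic existence step, in that order. The parenthetical remark in the statement isolates the real content: the spectral hypotheses on $A$ are exactly the classical condition guaranteeing a Lyapunov certificate, i.e. a matrix $P\succ 0$ with $\cL_A(P)=AP+PA^T\preceq 0$. Granting such a $P$, producing the DH triple $(J,R,Q)$ is immediate, so I would dispatch that direction first and then concentrate on the existence of $P$, which I expect to be the delicate part.

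Given $P\succ 0$ with $\cL_A(P)\preceq 0$, set $Q=P^{-1}\succ 0$ and $M=AP$, and split $M$ into its skew-symmetric and symmetric parts via
\begin{equation*}
J=\tfrac12\bigl(M-M^T\bigr),\qquad R=-\tfrac12\bigl(M+M^T\bigr).
\end{equation*}
Then $J=-J^T$ by construction, while $M+M^T=AP+PA^T=\cL_A(P)\preceq 0$ forces $R=R^T\succeq 0$. Since $J-R=M=AP$, we obtain $(J-R)Q=AP\,P^{-1}=A$, which is precisely the representation $A=(J-R)Q$. This direction uses no spectral information beyond the existence of $P$.

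The substance therefore lies in constructing $P$. I would use a real similarity $T$ bringing $A$ to block-diagonal form $\tilde A:=T^{-1}AT=\mathrm{diag}(A_s,S)$, where $A_s$ collects the eigenvalues in the open left half-plane and $S$ collects those on the imaginary axis. For the Hurwitz block $A_s$ the standard Lyapunov equation supplies $P_s\succ 0$ with $A_sP_s+P_sA_s^T\prec 0$. For the marginal block the semisimplicity hypothesis is essential: it makes $S$ diagonalizable, so its real Jordan form is built from the $2\times 2$ skew blocks $\left[\begin{smallmatrix}0&\omega\\-\omega&0\end{smallmatrix}\right]$ (together with zero scalars for any semisimple eigenvalue at the origin), i.e. $S$ can be taken real skew-symmetric, whence $P_c=I$ gives $SP_c+P_cS^T=0\preceq 0$. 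Assembling $\tilde P=\mathrm{diag}(P_s,I)\succ 0$ and pulling back by the congruence $P=T\tilde P T^T$ preserves positive definiteness and yields $\cL_A(P)=T\bigl(\tilde A\tilde P+\tilde P\tilde A^T\bigr)T^T\preceq 0$.

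The main obstacle is exactly this marginal block. Semisimplicity cannot be dropped: a nontrivial Jordan block attached to a purely imaginary eigenvalue admits no bounded quadratic Lyapunov certificate, and the reduction to real skew-symmetric form fails, so $\cL_A(P)\preceq 0$ becomes infeasible. Verifying that the real Jordan reduction can be carried out over $\R$ and that the two blockwise certificates assemble into a single $\tilde P\succ 0$ is routine once the diagonalizability of the imaginary-axis part is established.
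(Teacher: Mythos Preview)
Your argument is correct. The paper does not actually prove this lemma---it is quoted from \cite{GilS16}---but the explicit DH construction you give from a Lyapunov certificate, namely $J=\tfrac12(AP-PA^T)$, $R=-\tfrac12(AP+PA^T)$, $Q=P^{-1}$, is exactly the one the paper later writes out in the proof of Proposition~\ref{prop:ASM}. You go further than the paper by supplying a proof of the existence of $P\succ 0$ with $\cL_A(P)\preceq 0$ via the real Jordan decomposition of the imaginary-axis block; the paper (and the lemma's parenthetical) simply takes this classical fact for granted.
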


From Lemma~\ref{lem:DHstable} we now have an equivalent characterisation of stable matrices in terms of the $n\times n$ matrices $J,Q,$ an $R$:
\begin{equation*}
\cS^n := \left\{ (J-R)Q~|~ J=-J^T, Q\succ 0,R\succeq 0 \right\}.
\end{equation*}
Note that the triple $\{J,Q,R\}$ that satisfies the above constraints is neither open or closed due to the mixture of definite and semidefinite constraints. However,  the above formulation allows us to restate the problem of finding the nearest stable matrix (problem~\eqref{eq:min_dist}) as follows: 
\begin{align}\label{eq:opt_reform}
\inf_{X\in \cS^n}~\|A-X\|_F^2 \quad &= \inf_{J=-J^T,R\succeq 0, Q\succ 0}\|A-(J-R)Q\|_F^2 \nonumber \\ &= \inf_{J=-J^T,R\succeq 0, Q\succeq 0}\|A-(J-R)Q\|_F^2,
\end{align}
where the final inequality is from Theorem 1 in~\cite{GilS16}.  Moreover, the authors in~\cite{GilS16} describe a selection of algorithms to solve the optimization problem on the right hand side of ~\eqref{eq:opt_reform}. An alternative approach that doesn't use the DH framework is provided in~\cite{OrbNV13}. Finally, we will make use of the following lemma from~\cite{GilS16} that characterizes imaginary axis eigenvalues: 
\begin{lem}\label{lem:DHeigs}
Let $J,Q,R\in \R^{n\times n}$ be such that $J=-J^T, Q\succeq 0,R\succeq 0$, then:
\begin{enumerate}
\item The spectrum of $(J-R)Q$ lies in the closed left half of the complex plane. Furthermore, $(J-R)Q$ has an eigenvalue on the $j\omega$-axis if and only if $RQx=0$ for some eigenvector $x$ of $JQ$.
\item All non-zero purely imaginary eigenvalues of $(J-R)Q$ are semisimple.
\end{enumerate}
\end{lem}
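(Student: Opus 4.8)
The plan is to prove both statements with a single Rayleigh-quotient-style device: for any eigenpair $(\lambda,x)$ of $A:=(J-R)Q$ with $x\neq 0$, I would premultiply the eigenvalue equation by $x^{*}Q$ and exploit that $Q=Q^{T}$, so that writing $y:=Qx$ the quadratic form $y^{*}Jy$ is purely imaginary (since $J=-J^{T}$) while $y^{*}Ry$ is real and nonnegative (since $R\succeq 0$). These two elementary facts drive everything.

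For statement 1, starting from $(J-R)Qx=\lambda x$ and left-multiplying by $x^{*}Q$ yields
\[
y^{*}Jy - y^{*}Ry = \lambda\, x^{*}Qx.
\]
Taking real parts and using $\Re(y^{*}Jy)=0$ gives $\Re(\lambda)\,(x^{*}Qx) = -\,y^{*}Ry \le 0$. Since $Q\succeq 0$ we have $x^{*}Qx\ge 0$; the only subtlety is the degenerate branch $x^{*}Qx=0$, where $Qx=0$ forces $Ax=0$ and $\lambda=0$, which is harmless. Otherwise $\Re(\lambda)\le 0$, giving the half-plane claim. The equality case $\Re(\lambda)=0$ forces $y^{*}Ry=0$, hence $Ry=0$, i.e.\ $RQx=0$; substituting this back collapses the eigen-equation to $JQx=\lambda x$, so $x$ is an eigenvector of $JQ$. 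Conversely, if $RQx=0$ with $JQx=\mu x$, then $Ax=\mu x$ and the same identity (now with the $R$ term absent) shows $\mu$ is purely imaginary, so $\mu$ lies on the $j\omega$-axis; the case $Qx=0$ again pins $\mu=0$, which is on the axis.

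For statement 2, fix a nonzero purely imaginary $\lambda=j\omega$. Semisimplicity is equivalent to the absence of a length-two Jordan chain, i.e.\ to the nonexistence of a vector $z$ and eigenvector $x\neq 0$ with $(A-\lambda I)z=x$. I would argue by contradiction, assuming such $x,z$ exist. From statement 1 we already have $RQx=0$ and $JQx=\lambda x$; taking conjugate transposes and using $R=R^{T}$, $J=-J^{T}$, and $\bar\lambda=-\lambda$, I would derive the dual identities $x^{*}QR=0$ and $x^{*}QJ=\lambda x^{*}$, whence $x^{*}Q(J-R)=\lambda x^{*}$. Premultiplying $(J-R)Qz-\lambda z=x$ by $x^{*}Q$ then makes the left-hand side telescope, $\lambda x^{*}Qz-\lambda x^{*}Qz=0$, forcing $x^{*}Qx=0$. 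As before this yields $Qx=0$, hence $\lambda x=JQx=0$, contradicting $\lambda\neq 0$ and $x\neq 0$.

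The two quadratic-form identities are routine; the main obstacle is the bookkeeping forced by $Q$ being only positive \emph{semi}definite. Because $x^{*}Qx=0$ does not immediately give $x=0$, each degenerate branch ($Qx=0$) must be tracked and shown to force $\lambda=0$ — which is precisely why the non-zero hypothesis is indispensable in statement 2. The other delicate point is getting the adjoint relations right: the sign flip from $\bar\lambda=-\lambda$ together with $(J-R)^{T}=-(J+R)$ is exactly what lets $x^{*}QR=0$ eliminate the $R$ contribution and produce the telescoping cancellation. Once these are in place the contradiction is immediate.
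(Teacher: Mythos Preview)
Your argument is correct in all essentials. The Rayleigh-quotient identity $y^{*}Jy - y^{*}Ry = \lambda\,x^{*}Qx$ with $y=Qx$ cleanly separates the imaginary and real contributions, and you handle the degenerate branch $x^{*}Qx=0$ (forcing $Qx=0$ and hence $\lambda=0$) properly in both parts. The semisimplicity argument via a length-two Jordan chain is also sound: the adjoint relations $x^{*}QR=0$ and $x^{*}QJ=\lambda x^{*}$ follow exactly as you say from $R=R^{T}$, $J^{T}=-J$, and $\bar\lambda=-\lambda$, and the telescoping cancellation then yields $x^{*}Qx=0$, contradicting $\lambda\neq 0$.

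As for comparison with the paper: there is nothing to compare against. The paper does not prove Lemma~\ref{lem:DHeigs}; it is quoted verbatim from \cite{GilS16} and used as a black box in the proof of Proposition~\ref{prop:ASM}. Your proof is the natural one and is essentially the argument one would expect to find in the cited source, so you have not taken a different route so much as supplied the omitted details.
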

Note that lemma~\ref{lem:DHeigs} allows for $Q$ to be rank deficient. For notational convenience, the triple $\{J,Q,R\}$ satisfying the DH constraints will be denoted by
\begin{equation*}
\cD := \{ J,Q,R \in \R^{n \times n} ~|~ J=-J^T, Q\succeq 0, R\succeq 0\}.
\end{equation*}

\section{Results}\label{se:Res}
In this section an iterative method is presented that computes sub-optimal solutions to problem~\eqref{eq:opt_reform} in the DH setting, subject to the additional constraint that $X$ is Metzler. 

\subsection{Block Coordinate Descenct Algorithm}
We now focus on solving~\eqref{eq:opt_reform} subject to the constraints that $X=(J-R)Q\in \cM\cS^n$ with $\{J, Q, R \}\in \cD$. This results in the following optimization problem:
\begin{eqnarray}
 \inf_{J=-J^T,R\succeq 0, Q\succeq 0}&\|A-(J-R)Q\|_F^2 \label{eq:opt}\\
 \text{s.t.}& (J-R)Q\in \cM\cS^n \nonumber.
\end{eqnarray}
To begin with we will focus on the stable Metzler constraint. Clearly this is not a convex constraint as it requires finding matrices $J,R,Q,P$ such that 
\begin{align*}
[(J-R)Q]P+P[(J-R)Q]^T &\prec 0,\\
P&\succ 0,\\
\{J,Q,R\} &\in \cD.
\end{align*}
Furthermore, the DH framework is stated in terms of a stability result, not an asymptotic stability result. Ideally, we would like to avoid matrices with semisimple $j\omega$-eigenvalues. The following result addresses the asymptotic stability issue and uses diagonal stability in order to provide additional constraints to obtain a Metzler matrix. A block descent algorithm will decouple the problem into two convex subproblems that provide an approximate solution to~\eqref{eq:opt}.
\begin{prop}\label{prop:ASM}
Let $X=(J-R)Q$ be a stable matrix with $\{J,Q,R\}\in \cD$. If, additionally, $R\succ 0$, then $X$ is asymptotically stable. Furthermore, if $[J-R]_{ij}\ge 0$ for all $i,j\neq 0$ and $Q$ is diagonal, then $X$  is Metzler.
\end{prop}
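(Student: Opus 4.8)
The plan is to treat the two claims separately, using Lemma~\ref{lem:DHeigs} for the asymptotic-stability part and a direct entrywise computation for the Metzler part. For asymptotic stability I would argue that the extra hypothesis $R\succ 0$ removes every eigenvalue of $X$ from the imaginary axis. Since $\{J,Q,R\}\in\cD$, Lemma~\ref{lem:DHeigs} already places the spectrum of $X=(J-R)Q$ in the closed left half plane and tells us that $X$ has an eigenvalue on the $j\omega$-axis if and only if $RQx=0$ for some eigenvector $x$ of $JQ$. The strategy is to show this condition is vacuous: if $R\succ 0$ then $R$ is invertible, so $RQx=0$ forces $Qx=0$, whence $JQx=0$ and $Xx=(J-R)Qx=0$; thus the only possible imaginary eigenvalue is $\lambda=0$, and ruling it out requires $Qx=0\Rightarrow x=0$. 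Invoking the original DH normalisation $Q\succ 0$ then closes the argument and yields a strictly left-half-plane spectrum, i.e. asymptotic stability.

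An equivalent and perhaps cleaner route uses the Hamiltonian as a Lyapunov function. Taking $V(x)=x^TQx$ and differentiating along $\dot x=(J-R)Qx$, the skew-symmetry $J^T=-J$ cancels the conservative part and leaves $\dot V=-2\,(Qx)^TR(Qx)=-2\,x^TQRQx$. With $R\succ 0$ this is nonpositive, and it is strictly negative for every $x\neq 0$ precisely when $Qx\neq 0$; when $Q\succ 0$ the candidate $V$ is itself positive definite and $QRQ\succ 0$ by congruence, so Lyapunov's theorem delivers asymptotic stability at once. I expect the main obstacle to be exactly the rank deficiency permitted by $Q\succeq 0$ in $\cD$: a singular $Q$ admits a zero eigenvalue of $X$ (for instance $Q=0$ gives $X=0$, which is stable but not asymptotically stable), so $R\succ 0$ alone is not sufficient. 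The write-up must therefore make the dependence on $Q$ being nonsingular explicit, either by carrying the assumption $Q\succ 0$ or by restricting attention to the case $\ker Q=\{0\}$.

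For the Metzler part the argument is a short computation that I would carry out directly. Writing $Q=\operatorname{diag}(q_1,\dots,q_n)$ with $Q\succeq 0$ gives $q_j\ge 0$, and right-multiplication by a diagonal matrix scales column $j$ by $q_j$, so the entries factor as $X_{ij}=[(J-R)Q]_{ij}=[J-R]_{ij}\,q_j$. For the off-diagonal indices the hypothesis gives $[J-R]_{ij}\ge 0$, so each such $X_{ij}$ is a product of two nonnegative numbers and is therefore nonnegative; hence $X$ has nonnegative off-diagonal entries and is Metzler. I read the condition ``$i,j\neq 0$'' in the statement as ranging over the off-diagonal indices $i\neq j$, since the diagonal of $X$ is unconstrained for the Metzler property. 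This step is routine, and the only care needed is the observation that diagonality of $Q$ prevents any mixing of entries that could spoil the sign pattern inherited from $J-R$.
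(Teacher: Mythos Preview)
Your argument is correct and follows essentially the same route as the paper: invoke Lemma~\ref{lem:DHeigs} to place the spectrum in the closed left half plane and then use $R\succ 0$ to kill the imaginary-axis criterion $RQx=0$, followed by the same entrywise computation $X_{ij}=[J-R]_{ij}\,q_j$ for the Metzler claim. The only cosmetic difference is that where you argue ``$R$ invertible, so $RQx=0\Rightarrow Qx=0$'', the paper packages the same step as an application of Sylvester's rank inequality (Lemma~\ref{lem:S}) to conclude $RQ$ is full rank; your direct argument is equivalent and arguably cleaner. You are also more careful than the paper about the $Q\succeq 0$ versus $Q\succ 0$ issue: the paper sidesteps it by silently passing to the particular representation $Q=P^{-1}\succ 0$ built from a Lyapunov certificate, whereas you correctly flag that the conclusion genuinely needs $\ker Q=\{0\}$. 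Your alternative Lyapunov route via $V(x)=x^TQx$ and $\dot V=-2x^TQRQx$ is a genuinely different and self-contained argument that the paper does not give; it has the advantage of bypassing Lemma~\ref{lem:DHeigs} entirely, at the cost of requiring $Q\succ 0$ up front for $V$ to be a valid Lyapunov function.
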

\begin{proof}
By Lemma~\ref{lem:DHstable}, $X$ being stable implies it has a DH representation. Additionally, $X$ stable satisfies $\cL_X(P)\preceq0$, for $P\succ 0$. Now set $J=\frac{XP-PX^T}{2}$ , $R = \frac{-XP -PX^T}{2}$, $Q=P^{-1}$ and we arrive at one DH representation of $X$. From 1) in Lemma~\ref{lem:DHeigs}, for there to be no eigenvalues on the $j\omega$-axis we require $RQx\neq 0$ for any $x$ (note, this is a stronger requirement than in Lemma~\ref{lem:DHeigs} ). By Sylvester's inequality (Lemma~\ref{lem:S} below with $m=n$) we have that  $RQ$ is full rank. Moreover, $R\succ 0$ implies $\cL_X(P)\prec0$, and thus we have asymptotic stability. Imposing the constraints on $(J-R)$ as in the theorem and the diagonal consraint on $Q$ it is clear that the elements on the off-diagonal of $(J-R)Q$ will be non-negative thus we have a Metzler matrix. As the resulting matrix is Metzler, the diagonal constraint on $Q$ imposes no additional conservatism.
\end{proof}

\begin{lem}[Sylvester's Inequality]\label{lem:S} Let $A\in \R^{m\times n}$ and $B\in \R^{n\times p}$, then 
$\textbf{rank}(A) + \textbf{rank}(B) -n \le \textbf{rank}(AB).$
\end{lem}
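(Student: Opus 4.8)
The plan is to prove this by pure dimension counting, viewing $A$ and $B$ as linear maps and invoking the rank--nullity theorem. Write $r_A=\textbf{rank}(A)$ and $r_B=\textbf{rank}(B)$, and regard $B:\R^p\to\R^n$ and $A:\R^n\to\R^m$. The central object is the restriction of $A$ to the subspace $\text{range}(B)\subseteq\R^n$, which I denote $A|_{\text{range}(B)}$. The whole argument is to apply rank--nullity to this one restricted map and then bound a single kernel dimension.

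First I would identify the three dimensions associated with $A|_{\text{range}(B)}$: its domain $\text{range}(B)$ has dimension $r_B$; its image is $A(\text{range}(B))$, which coincides with $\text{range}(AB)$ and therefore has dimension $\textbf{rank}(AB)$; and its kernel is $\ker(A)\cap\text{range}(B)$. Rank--nullity then yields the exact identity $r_B=\textbf{rank}(AB)+\dim\!\big(\ker(A)\cap\text{range}(B)\big)$.

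The remaining step bounds the kernel term. Since $\ker(A)\cap\text{range}(B)\subseteq\ker(A)$ and $\dim\ker(A)=n-r_A$ by rank--nullity applied to $A$ itself, we get $\dim\!\big(\ker(A)\cap\text{range}(B)\big)\le n-r_A$. Substituting into the identity and rearranging gives $\textbf{rank}(AB)\ge r_B-(n-r_A)=r_A+r_B-n$, which is exactly the claim.

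I expect no genuine obstacle here; the only point requiring care is the correct identification of the image and kernel of the restricted map, in particular the elementary fact that $A(\text{range}(B))=\text{range}(AB)$. An alternative, equally short route avoids subspaces entirely: one checks the block identity $\begin{pmatrix} I_m & -A \\ 0 & I_n\end{pmatrix}\begin{pmatrix} AB & 0 \\ B & I_n \end{pmatrix}=\begin{pmatrix} 0 & -A \\ B & I_n \end{pmatrix}$, notes the left factor is invertible so both sides have equal rank, evaluates the rank of the left product as $\textbf{rank}(AB)+n$ by clearing $B$ against the identity block, and bounds the rank of the right product below by $r_A+r_B$ via a nonsingular $(r_A+r_B)\times(r_A+r_B)$ submatrix assembled from the disjoint row/column supports of the $-A$ and $B$ blocks. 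Equating the two expressions recovers the inequality.
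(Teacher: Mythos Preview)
Your argument is correct. The rank--nullity computation on $A|_{\text{range}(B)}$ is the standard textbook proof: the identity $r_B=\textbf{rank}(AB)+\dim(\ker A\cap\text{range}(B))$ is exact, and bounding the kernel term by $\dim\ker A=n-r_A$ gives the inequality immediately. The alternative block-matrix route you sketch is also valid; the only point worth spelling out is why the $(r_A+r_B)\times(r_A+r_B)$ submatrix is nonsingular: after selecting independent rows/columns of $A$ and $B$ it has the form $\begin{pmatrix}0 & -A'\\ B' & C\end{pmatrix}$ with $A',B'$ invertible, and a vector in its kernel is forced to zero regardless of $C$.

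As for comparison with the paper: the paper states Sylvester's inequality as Lemma~\ref{lem:S} purely for reference inside the proof of Proposition~\ref{prop:ASM} and gives \emph{no} proof of its own. So there is nothing to compare against; your write-up simply supplies what the paper omits.
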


Based on Proposition~\ref{prop:ASM} the following non-convex optimization problem will provide a solution to the nearest stable Metzler matrix problem described by~\eqref{eq:opt}:
\begin{align}
\inf_{J=-J^T,R\succ 0, Q\succeq 0}\quad &\|A-(J-R)Q\|_F^2  \nonumber \\
 \text{s.t.}\quad& Q_{ij}=0 \quad \forall i\neq j    \label{eq:relaxed}\\
 & [J-R]_{ij}\ge 0 \quad \forall i\neq j \nonumber.
\end{align}
Ignoring the constraints for the moment, it should be noted that restricting attention to the case where $Q\succ 0$ and right-multiplying the expression inside the norm operator by $Q^{-1}$ will convexify the problem. Unfortunately, it will also push towards a trivial solution with the smallest allowable spectrum of $Q$. Instead we will pursue a \emph{block coordinate descent} method \cite{Nes12} that freezes a subset of the variables, rendering the resultant problem convex, and then optimizes over the remaining subset with the previously computed variables held constant. From~\eqref{eq:relaxed}, a natural partition to choose is $\{J,R\}$ and $\{Q\}$. Algorithm~\ref{alg:NM} below provides the details of such an implementation.

\begin{algorithm}
\caption{Find Nearest Metzler: Coordinate Descent}\label{alg:NM}
\begin{algorithmic}[1]
\Require $J=-J^T, Q\succeq 0, R \succ 0$ and $maxiter$. 
\State $iter\gets 1$
\While{$iter \le maxiter$}
	\State Fix $Q$ and solve 
	\begin{align} \label{eq:optJR}
		\inf_{J'=-J'^T,R'\succ 0}\quad &\|A-(J'-R')Q\|_F^2 \\
		\text{s.t.}\quad& [J'-R']_{ij}\ge 0 \quad \forall i\neq j \nonumber
	\end{align}
	\State $J\gets J'$, $R\gets R'$ 
	\State Fix $J,R$ and solve 
	\begin{align} 
		\inf_{Q'\succeq 0}\quad &\|A-(J-R)Q'\|_F^2 \label{eq:optQ}\\
		\text{s.t.}\quad & Q'_{ij}=0 \text{~if~} i\neq j \nonumber
	\end{align}
	\State $Q\gets Q'$
	\State $iter \gets iter + 1$ 
	\EndWhile
\Ensure{$J,R,Q$ such that $X=(J-R)Q$ is asymptotically stable and Metzler.}
\end{algorithmic}
\end{algorithm}
The two subproblems~\eqref{eq:optJR} and~\eqref{eq:optQ} are both convex and have nice structural properties, and Algorithm~\ref{alg:NM} is guaranteed to converge as it is a two-block problem\cite{GriS00}. For the non-Metzler version of this problem, the work in~\cite{GilS16} goes even further than the two-block approach given above: they are able to implement a fast projected gradient scheme which outperforms the two-block descent method. This is possible due to the fact that analytic projections onto $\cS_+$, as well as the set of skew-symmetric matrices, are easily derivable. For the case of Algorithm~\ref{alg:NM} it is not immediately obvious how to derive the 
projections to take into account the element-wise positivity constraints on $J-R$ and the diagonal constraint on $Q$. However, in Sections~\ref{sec:JR} and \ref{sec:Q} respectively we will exploit a computational relaxation for~\eqref{eq:optJR} and the data structure in the case of subproblem~\eqref{eq:optQ}.
\subsection{Optimizing over $J,R$}\label{sec:JR}
The authors in~\cite{GilS16} note that the subproblems in Algorithm~\ref{alg:NM} are implementable by first-order methods and thus may scale well with problem size. In this work we will solve subproblem~\eqref{eq:optJR}  in its natural form and also look at improving scaling by relaxing the semidefinite constraint to a more simple cone constraint.

The convex optimization subproblem~\eqref{eq:optJR} is a semidefinite optimization problem (SDP) -- or at least is trivially convertible to one. Such problems have been shown to have a polynomial time complexity (see ~\cite{VanB96} for a review of the subject and a description of their numerical implementation). Dropping the element-wise inequality from~\eqref{eq:optJR}, the methods of~\cite{GilS16} reduce the problem to a projected gradient descent where the projections $\cP_{\mathbb{G}}$ and $\cP_{\mathbb{S}_+}$ are defined by
\begin{equation*}
\cP_{\mathbb{G}}(X) = \frac{X-X^T}{2}~ \text{and~} \cP_{\mathbb{S}_+}(X) = UDU^T,
\end{equation*}
where $D = \text{diag}(\max(0,\lambda_1),\hdots, \max(0,\lambda_n))$, $U$ is orthonormal, and $\lambda_i$ is the $i^{\text{th}}$ eigenvalue of $X$. Unfortunately, the constraint $[J-R]\ge 0$ on the off-diagonal elements of $J-R$ prohibits the use of these projections. As a result we will either solve~\eqref{eq:optJR} as it is specified, or, at the cost of finding suboptimal solutions, introduce two relaxations of the problem that have more attractive computational tractability.

\begin{defn}
A symmetric matrix $F\in \R^{n\times n}$ is diagonally dominant if $F_{ii}\ge \sum_{j\neq i}|F_{ij}|$ for $i=1,\hdots,n$. Furthermore, if there exists a positive definite diagonal matrix $D$ such that $DFD$ if diagonally dominant then we say that $F$ is scaled diagonally dominant. 
\end{defn}
Denoting the set of $n\times n$ diagonally dominant and scaled diagonally dominant matrices by $DD^n$ and $sDD^n$ respectively, it is straightforward to show that the cone of positive definite matrices contains $DD^n$ and $sDD^n$ and that $DD^n \subseteq sDD^n$. What is particularly useful about this is that the constraints which enforce diagonal dominance are linear, whilst the $sDD$ constraints can be enforced via a second-order cone constraint~\cite{AhmH15,SOCP}. To see this, note that $F\in sDD^n$ if and only if it admits a decomposition $F=\sum_{i<j}M^{ij}$, where $M^{ij}$ is a symmetric $n\times n$ matrix with zeros everywhere apart from at $M_{ii}, M_{ij}$ and $M_{ij}=M_{ji}$, which together make the matrix $\left[ \begin{array}{ll} M_{ii} & M_{ij}\\ M_{ij} & M_{jj} \end{array}\right]$ positive semidefinite. Such constraints are called rotated quadratic cone constraints and are imposed as 
\begin{equation*}
\left\| \left(\begin{array}{c} 2M_{ij} \\ M _{ii}- M_{jj}  \end{array} \right)  \right\| \le M_{ii} + M_{jj}, \quad M_{ii}\ge 0.
\end{equation*} 
For the case of $DD^n$ matrices, let $M=M^T$, then $M\in DD^n$ and consequently $M \succ 0$ if the linear constraints 
\begin{align*}
M_{ii} > 0, \quad M_{ii} > \sum_{j\neq i}^n y_{ij}, \quad i=1,\hdots,n,\\
-y_{ij}\le M_{ij}\le y_{ij}, \text{ with } y_{ij}=y_{ij} ~\forall i\neq j
\end{align*} 
are feasible. Applying either of these relaxations to~\eqref{eq:optJR} gives
\begin{align*}
\inf_{J=-J^T}\quad &\|A-(J-R)Q\|_F^2 \\
\text{s.t.} \quad& [J-R]_{ij}\ge 0 \quad \forall i\neq j \\
\quad & R \in \cD\cD^n
\end{align*}
where $\cD\cD^n$ is chosen to be either $DD^n$ or $sDD^n$.

\subsection{Optimizing over $Q$}\label{sec:Q}
The optimization subproblem~\eqref{eq:optQ} is a quadratic program (QP) with linear constraints \cite[Ch. 4.4]{CONVEX}. The general form of such a problem is
\begin{align}\label{eq:optQP}
\text{minimize} \quad & \frac{1}{2}x^TPx + q_0^Tx +r_0 \\
\text{s.t.}\quad &  q_i^Tx +r_i \le 0, \quad i=1,\hdots,m \nonumber,
\end{align}
where $x\in \R^n$ is the decision vector and $P\in \R^{n\times n}, q_i \in \R^n$, and $r_i \in \R$ with $P\succeq 0$ are given. In the case of optimization~\eqref{eq:optQ} there is significant structure to be taken advantage of. Specifically the objective function matrices take the form
\begin{equation*}
P = \left[\begin{array}{ccc} \sum_{i=1}^nB_{i1}^2 &  0 &0 \\ 0 & \ddots & 0 \\ 0 & 0& \sum_{i=1}^nB_{in}^2 \end{array} \right], 
\end{equation*}
\begin{equation*}
q_0 = -2\left[\begin{array}{c} \sum_{i=1}^n A_{i1}B_{i1}\\ \vdots \\ \sum_{i=1}^n A_{in}B_{in} \end{array}\right], \quad r_0 = \sum_{i,j}^nA_{ij}^2=\|A\|^2_F,
\end{equation*}
where $B:=J-R$. The constraints of~\eqref{eq:optQ} map to the constraints of~\eqref{eq:optQP} via $m=1$, $q_i^T = -\textbf{1}_{1\times n}$ and $r_i = \epsilon$. By default $\epsilon = 0$, but if one wanted to fix positive definiteness of $Q$ it could be set to some arbitrarily small positive scalar. Given the structure described, we now propose a simple first-order method to solve~\eqref{eq:optQ} using the \emph{Alternating Direction Method of Multipliers} (ADMM)\cite{ADMM}. Problem~\eqref{eq:optQ} can be regarded as minimizing a convex function subject to the constraint that $x\in \cC$ where $\cC$ is a convex set. In this case $\cC = \R_+^n$. The ADMM approach solves such a problem by introducing a second decision vector $z\in \R^n$,
 and solves
 \begin{align*}
 \text{minimize} \quad & f(x) + g(z)\\
 \text{s.t} \quad & x = z,
 \end{align*}
where $g$ is the indicator function for $\cC$. The augmented Lagrangian function with penalty factor $\rho >1$ is defined as
\begin{equation*}
L_{\rho}(x,z,u) = f(x)+g(z) + \frac{\rho}{2}\|x-z+u\|_2^2.
\end{equation*}
The simplest form of ADMM iterates over minimizing $L_{\rho}$ for fixed $x$, then fixed $z$. The scaled ADMM (c.f.~\cite[p.15]{ADMM}) formulation iterates as follows:
\begin{align*}
x^{k+1} &: = \argmin_x ~ \left[ f(x) +  \frac{\rho}{2}\|x-z^k+u^k\|_2^2 \right]\\
z^{k+1} & := \cP_{\cC}(x^{k+1}+u^k)\\
u^{k+1} & := u^k + x^{k+1} - z^{k+1}.
\end{align*}
For subproblem~\eqref{eq:optQ} the $x$ update has an analytic formula derivable from the KKT-conditions, and the projection in the $z$-update is particularly simple. The resulting iterates are 
\begin{align*}
x^{k+1} &= (P+\rho I)^{-1}q_0 - \rho (P+\rho I)^{-1}(z^k - u^k) \\
z^{k+1} & = (x^{k+1}+u^k)_+. 
\end{align*} 
Note that $P+\rho I$ is a diagonal matrix, hence its inversion simply involves $n$ scalar division operations. Additionally, the matrix $(P+\rho I)^{-1}$ can be computed offline \emph{a priori}. Under the assumptions that i) the extended functions $f,g:\R^n \rightarrow \R \cup \{+\infty\}$ are closed proper and convex, and ii)  there exists a saddle point of the un-augmented Lagrangian $L_0$, then the ADMM iterates will converge and the residual $x^k+z^k \rightarrow 0$ as $k \rightarrow \infty$ (c.f. \cite[p. 17]{ADMM}).

\subsection{Duality}
In order to assess the performance of the algorithm presented above we will compare its solutions to the global lower bound computed by solving the dual to problem~\eqref{eq:Metz_d_opt}. The derivation and analysis of the dual are the focus of ongoing work, however for completeness it is given below:
\begin{align}\label{eq:dual}
\sup_{\lambda \in \R^{n\times n}} \quad & -\frac{1}{4}\|\Lambda\|_F^2 - \langle \Lambda,A\rangle \nonumber \\
\text{s.t.} & \quad \Lambda_{ii} = 0, \quad i=1,\hdots, n\\
& \quad\Lambda_{ij} \ge  \quad \forall i\neq j \nonumber
\end{align}
and the optimal value is denoted by $d^{\star}$.
\begin{prop}
There is a non-zero duality gap for problems \eqref{eq:Metz_d_opt} and \eqref{eq:dual}, i.e. $p^{\star}>d^{\star}$.
\end{prop}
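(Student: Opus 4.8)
The plan is to recognize \eqref{eq:dual} as the Lagrangian dual not of the full nonconvex problem \eqref{eq:Metz_d_opt} but of its \emph{convex relaxation} obtained by discarding the bilinear stability constraint $X\zeta<0$ (and the sign constraint $\zeta>0$), retaining only the Metzler membership $X\in\cM^n$. Introducing multipliers $\Lambda_{ij}\ge 0$ for the constraints $-X_{ij}\le 0$ ($i\neq j$) and fixing $\Lambda_{ii}=0$, the Lagrangian is $\|A-X\|_F^2-\langle\Lambda,X\rangle$; minimizing over the now-unconstrained $X$ gives the closed-form minimizer $X=A+\tfrac12\Lambda$, and back-substitution yields exactly $-\tfrac14\|\Lambda\|_F^2-\langle\Lambda,A\rangle$ subject to $\Lambda_{ii}=0$, $\Lambda_{ij}\ge 0$, which is the objective and feasible set of \eqref{eq:dual}. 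Since the feasible set of stable Metzler matrices is contained in $\cM^n$, dropping $X\zeta<0$ only enlarges the feasible set, so the relaxed minimum is $\le\pstar$; combined with weak duality this gives $\dstar\le\pstar$, and the entire task reduces to establishing strictness.

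Next I would pin down $\dstar$ in primal terms. The relaxed problem $\inf_{X\in\cM^n}\|A-X\|_F^2$ is the Euclidean projection of $A$ onto the closed convex cone $\cM^n$; Slater's condition holds trivially, so strong duality applies to the \emph{relaxed} problem and $\dstar=\|A-\cP_{\cM^n}(A)\|_F^2$, where $\cP_{\cM^n}(A)$ retains the diagonal of $A$ and replaces each off-diagonal entry by $\max(A_{ij},0)$. The essential observation is that this projection is computed with no regard for stability, and this is precisely where the gap originates: $\cP_{\cM^n}(A)$ need not be Hurwitz.

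To prove $\pstar>\dstar$ I would exhibit an instance in which $\cP_{\cM^n}(A)$ fails to be stable. The cleanest is the scalar case $n=1$ with $A=1$: every scalar is Metzler, the constraint $X\zeta<0$ with $\zeta>0$ reduces to $X<0$, so $\pstar=\inf_{X<0}(1-X)^2=1$, whereas the dual forces $\Lambda_{11}=0$, hence $\Lambda=0$ and $\dstar=0$, giving $\pstar=1>0=\dstar$. More generally, whenever $\cP_{\cM^n}(A)$ is strictly unstable, uniqueness of the projection onto the convex set $\cM^n$ forces $\|A-X\|_F^2>\dstar$ for every $X\in\cM^n$ other than the projection itself, and since a matrix with an eigenvalue of strictly positive real part sits in the interior of the unstable set (by continuity of the spectrum), no feasible sequence in $\cM\cS^n$ can drive the objective down to $\dstar$; hence $\pstar$ stays strictly above $\dstar$.

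The main obstacle is the strictness half rather than weak duality. Because $\cM\cS^n$ is \emph{open} and $\pstar$ is an infimum that is in general not attained, ruling out $\pstar=\dstar$ requires a quantitative separation between the (infeasible) Metzler minimizer and the feasible stable set: one must argue that when $\cP_{\cM^n}(A)$ has an eigenvalue with strictly positive real part it cannot be approached by stable Metzler matrices, and handle the delicate marginal case in which the projection lands on the stability boundary. Packaging the argument through the explicit instance above sidesteps this boundary analysis while still proving the proposition.
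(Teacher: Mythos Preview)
The paper does not prove this proposition. It is stated immediately after the remark that ``the derivation and analysis of the dual are the focus of ongoing work'', and no argument follows; the fragmentary Lagrangian computation that appears after the bibliography in the source file is incomplete draft material and does not reach a proof. There is therefore no paper argument to compare against.

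Your argument is sound. The key observation --- that \eqref{eq:dual} is exactly the Lagrangian dual of the \emph{convex} relaxation $\inf_{X\in\cM^n}\|A-X\|_F^2$ obtained by discarding the bilinear constraint $X\zeta<0$ --- is the right structural explanation for the gap: the dual is blind to stability. Strong duality for that convex projection problem then pins $\dstar=\|A-\cP_{\cM^n}(A)\|_F^2$, and your $n=1$, $A=1$ instance gives $\pstar=1>0=\dstar$ cleanly.

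One caveat worth stating explicitly: your proof shows that a nonzero gap \emph{can occur}, not that $\pstar>\dstar$ for every $A$. The universal reading is false --- take $A\in\cM\cS^n$, or $A$ Metzler and marginally stable (e.g.\ $A=\mathrm{diag}(0,-1)$, where $A-\epsilon I\in\cM\cS^n$ drives the objective to zero); in both cases $\pstar=\dstar=0$. The proposition should be read as ``strong duality fails in general'', which is precisely what your scalar example establishes.
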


\section{Numerical Examples}\label{se:eg}
In the first example we consider finding the nearest stable Metzler matrix to that of a stable $5\times 5$ matrix generated using Matlab's \texttt{rss} command. The matrix generated was 
\begin{equation*}
A_1 = \left[ \begin{array}{rrrrr} -1.733  &  1.295 &  -0.497  &  0.765  &  0.763  \\
    0.481 &  -1.472 &  -0.945  &  1.381 &   0.146 \\
    0.680   & 0.326 &  -1.392 &  -0.536 &   1.957 \\
   -1.442 &  -1.127 &  -0.355 &  -1.079   & 1.375 \\
    0.566  &  0.008  &  1.849  &  1.607  & -6.299 \end{array}\right]
\end{equation*}
given to 3dp. The SDP solver chosen was SeDuMi~\cite{SEDUMI}, which was used in conjunction with the modelling tool YALMIP~\cite{YALMIP}. Applying Algorithm~\ref{alg:NM} produces the Metzler matrix
\begin{equation*}
X _1= \left[ \begin{array}{rrrrr} -1.807   & 1.234 &   0 &   0.717 &   0.720 \\
    0.402 &  -1.536  &  0  &  1.330  &  0.101 \\
    0.582  &  0.245  & -1.507  &  0  &  1.901 \\
    0  &  0  &  0  & -1.216  &  1.252 \\
    0.487  &  0  &  1.756   & 1.556  & -6.344 \\\end{array}\right],
\end{equation*}
where $\|A_1-X_1\|_F^2 = 5.019$, $d^{\star} = 1.223$, and $X_1$ is asymptotically stable with eigenvalues 
\begin{equation*}
\lambda(X_1)=\{-7.281, -0.005, -2.433, -1.346 \pm 0.578 j\}.
\end{equation*}
Next we consider the problem of finding the nearest Metzler matrix to that of an unstable matrix $A_2$ where
\begin{equation*}
A_2 = \left[\begin{array}{rrrrr} 0.647     & 0.172      & -0.749      &  0.728    & 0.717 \\
   -0.354  &  -0.062   & -0.936   &-0.773   & -0.778\\
    0.046  &   1.199   & -1.269    & 0.837    & 0.316\\
   -0.793  &  0.802   & 0.498   & -1.128    & 1.407\\
   -1.551  &  1.053  &  2.789   & -1.425    & 0.401 \end{array} \right].
\end{equation*}
First Algorithm 1 is run using the default constraint of $Q\succ 0$. The result is the stable Metzler matrix
\begin{equation*}
X_2 = \left[\begin{array}{rrrrr}-0.059 &   0.170 &   0.003  &  0.665 &   0.6552 \\
    0  & -0.173  &  0.03   & 0 &   0\\
    0   & 1.180  & -1.316   & 0.008  &  0\\
    0    &0.801  &  0.495  & -1.178  &  1.357\\
    0   & 1.040&    2.756   & 0 & -0.183\\ \end{array} \right],
\end{equation*}
where $\|A_2-X_2\|_F^2 = 9.485$, $d^{\star}=1.957$, and $X_2$ has eigenvalues 
\begin{equation*}
\lambda(X_2)=\{ -0.059, -0.175, -0.155, -1.26\pm 0.139j \}.
\end{equation*}

\begin{figure}[t]
  \centering
  \includegraphics[scale =0.325]{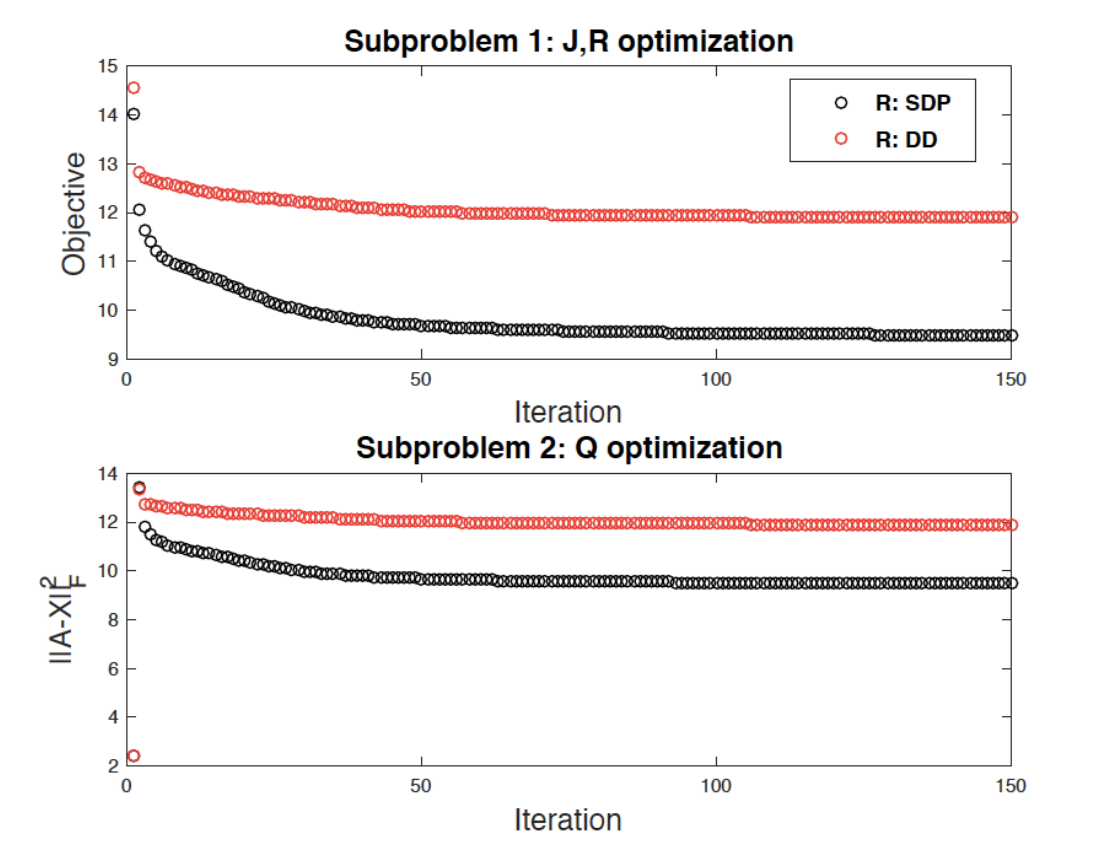} 
  \caption{The objective values for the subproblems in Algorithm~\ref{alg:NM}. Top: Subproblem~\eqref{eq:optJR}, optimizing over $J$ and $R$. Iterates in black correspond to $J$ being a positive semidefinte constraint. Iterates in red correspond to a relaxed diagonally dominant constraint. Bottom: $Q$ solved as a QP corresponding to a SDP constraint on $J$ (black) and a diagonally dominant constraint (red). }\label{fig:converge}
\end{figure}

A plot of the convergence for both subproblems is shown in black in Figure~\ref{fig:converge}. Note that the overall objective function is equal to the objective of the second subproblem, in this case minimizing $Q$. In this example the convergence of the SDP step is particularly slow.

 Next we take the same matrix $A_2$ but constrain $R$ to be diagonally dominant. With this constraint we obtain$\|A_2-X_3\|_F^2 = 11.903>\|A_2-X_2\|_F^2$ as expected. The resulting Metzler matrix $X_3$ is given below
\begin{equation*}
X_3 = \left[ \begin{array}{rrrrr}   -0.011   & 0 &   0  &  0.713  &  0.216 \\
    0  & -0.895   & 0  &  0   & 0\\
    0   & 0.769  & -1.542  &  0 &  0\\
    0   & 0.385   & 0.361  & -1.145  &  0.901\\
    0    & 0.636 &   2.645   & 0  & -0.580\\
\end{array}\right],
\end{equation*}
with $\lambda(X_3)= \{-0.011, -0.895, -0.580, -1.542, -1.145\}$. The convergence of the iterates is shown in red in Figure~\ref{fig:converge}.

The algorithms described were implemented on test cases of full matrices up to order $n=35$, beyond which the number of iterations required to converge became prohibitive. For sparse matrices we were able to solve larger but not significantly larger problems. The main reason for this is that we are not taking full advantage of sparsity with the solver; recent methods such as those proposed in~\cite{ZheFPGW16} may improve this. In addition, the penalty function $\rho$ in the QP subproblem was not optimized, but further performance may be achievable by implementing the results from~\cite{EuhTSJ15}.

\section{Conclusion}
An iterative method for obtaining suboptimal solutions to the nearest Metzler matrix problem has been presented. The proposed algorithm decomposes the problem into two convex problems: an SDP, and a convex QP for which a first-order optimization scheme with analytic iterates is described. In the case of the SDP, a simple relaxation to an LP or SOCP was described that comes with a suboptimality trade-off. 

An alternative approach to solving this problem would be to adapt the method from~\cite{OrbNV13} which uses the Dikin ellipsoid method to directly solve $\inf_{X,P} \|A-X\|_F^2$ s.t. $\cL_X(P) \prec 0, P\succ 0$ and adapt it to take advantage of the diagonal stability result of Theorem~\ref{thm:Ran}. As noted in~\cite{SooA16}, there is a much richer set of matrices that admit diagonal Lyapunov functions than Metzler matrices alone. It would be interesting to extend this work to take this fact into account. 

Current work is underway to construct a more useful dual problem and characterize the nature of the duality gap for this problem and the case where $X$ is no longer constrained to be Metzler.

\section{Acknowledgements}
I would like to thank Riley Murray at Caltech for many helpful discussions regarding the dual problem. The outcome of which is currently being written up.

\bibliographystyle{IEEEtran}
\bibliography{biblio}




\end{document}